\newcommand{\expect}[1]{\mathbb{E}\bigg[#1\bigg]}
\newcommand{\script}[1]{{{\cal{#1} }}}
\newcommand{\tsf}[1]{\textsf{#1}}
\newtheorem{lemma}{\textbf{Lemma}}
\newtheorem{theorem}{\textbf{Theorem}}
\begin{document}
\allowdisplaybreaks \abovedisplayskip=.03in \belowdisplayskip=.03in

\title{Codes Can Reduce Queueing Delay in Data Centers}

\author{\large{Longbo Huang, Sameer Pawar, Hao Zhang, Kannan Ramchandran}%
%
%
\thanks{The authors are with the EECS dept at UC Berkeley, Berkeley, CA, 94720, USA. Emails: \{huang, spawar, zhanghao, kannanr\}@eecs.berkeley.edu.}

} \maketitle

\begin{abstract}
In this paper, we quantify how much codes can reduce the data retrieval latency in storage systems. By combining a simple linear code with a novel request scheduling algorithm, which we call Blocking-one Scheduling (BoS), we show analytically that it is possible to reduce data retrieval delay by up to $17\%$ over currently popular replication-based strategies. 
Although in this work we focus on a simplified setting where the storage system stores a single content, the methodology developed can be applied to more general settings with multiple contents. The results also offer insightful guidance to the design of storage systems in data centers and content distribution networks.
\end{abstract}

\section{Introduction}
In today's data centers, one of the most demanding tasks (in terms of latency) is ``disk-read," e.g.,  fetching the data for performing analytics such as MapReduce, or to serve the data to the end consumer. In many cases, this task is greatly complicated by the highly non-uniform data popularity, where the most popular data objects can be accessed ten times more frequently than the bottom third~\cite{scarlett-11}. This skewed demand leads to high contention for read tasks of the most popular data. To meet the demand and reduce data retrieval latency, current systems often introduces data redundancy by replicating many copies of each content, e.g., Hadoop replicates each content three or more times, to make the popular data more available and relieve the hot spots of read contentions, thereby reducing the average request latency.

This motivates us to \emph{investigate the fundamental role of redundancy in improving the system latency}. In particular, we compare two systems, one using codes and the other using simple replication. Heuristically, codes offer more flexibility when retrieving the data from the servers, thus may improve content retrieval latency. In this paper, we use a {\em non-asymptotic analytical approach} to quantify this intuition. 

To make the problem more concrete, consider an abstracted example of retrieving a file in a data center shown in Fig.~\ref{fig:example-intro}. The storage system consists of $4$ storage units, each capable of storing $1$ packet of the desired file that consists of $2$ packets $A$ and $B$. We consider two possible storage strategies 1) each packet is replicated two times; 2) file is encoded using a $(4,2)$ Maximum-Distance-Code (MDS) code. It is easy to see the redundancy factor is $2$ in both cases. There is a common dispatcher that queues and schedules the incoming requests. We assume the request process is Poisson and the service time of the
storage unit is exponentially distributed. 

Quantifying the exact request delay in a coded system is a challenging task. The main difficulty is that \emph{the scheduling algorithm needs to remember which SUs served earlier requests} to ensure that the request of a particular content is always served by distinct SUs. This makes the analysis extremely difficult. Moreover, since data centers cannot afford a redundancy factor of more than a few tens at most, the asymptotic analysis based approach advocated in~\cite{load-balancing-10}~\cite{jiq-2011} are not relavent. 


To tackle this challenge, we develop a novel scheduling algorithm called~\tsf{Blocking-one Scheduling (\tsf{BoS})}. The idea is to block some subsequent content retrieval requests until the head-of-line request is processed. This helps remove the dependency in the scheduling actions, and allows a clean analysis of the delay performance of system using codes. Fortunately, as we will see later, the fraction of throughput loss due to \tsf{BoS} is $O(1/r^2)$,  where $r$ is the redundancy factor. Indeed, even for $r = 2$ in our simple example, the \tsf{BoS} achieves $96\%$ of the maximum system throughput.

Under the above settings, we analytically show that strategy $2$ that uses codes reduces the average request delay, for the considered example, by $7\%$ or more compared to strategy $1$ that uses replication. The intuition behind this is that in the replication system, the requests for packet $A$ or $B$ can be satisfied by only two specific servers, while in the coded system \emph{any} two servers are sufficient to serve the file. Therefore, codes offer more flexibility in data retrieval due to multiplexing gain of available servers.
\begin{figure}[htbp]
\vspace{-.1in}
\centering
\includegraphics[width=3.3in, height=2in]{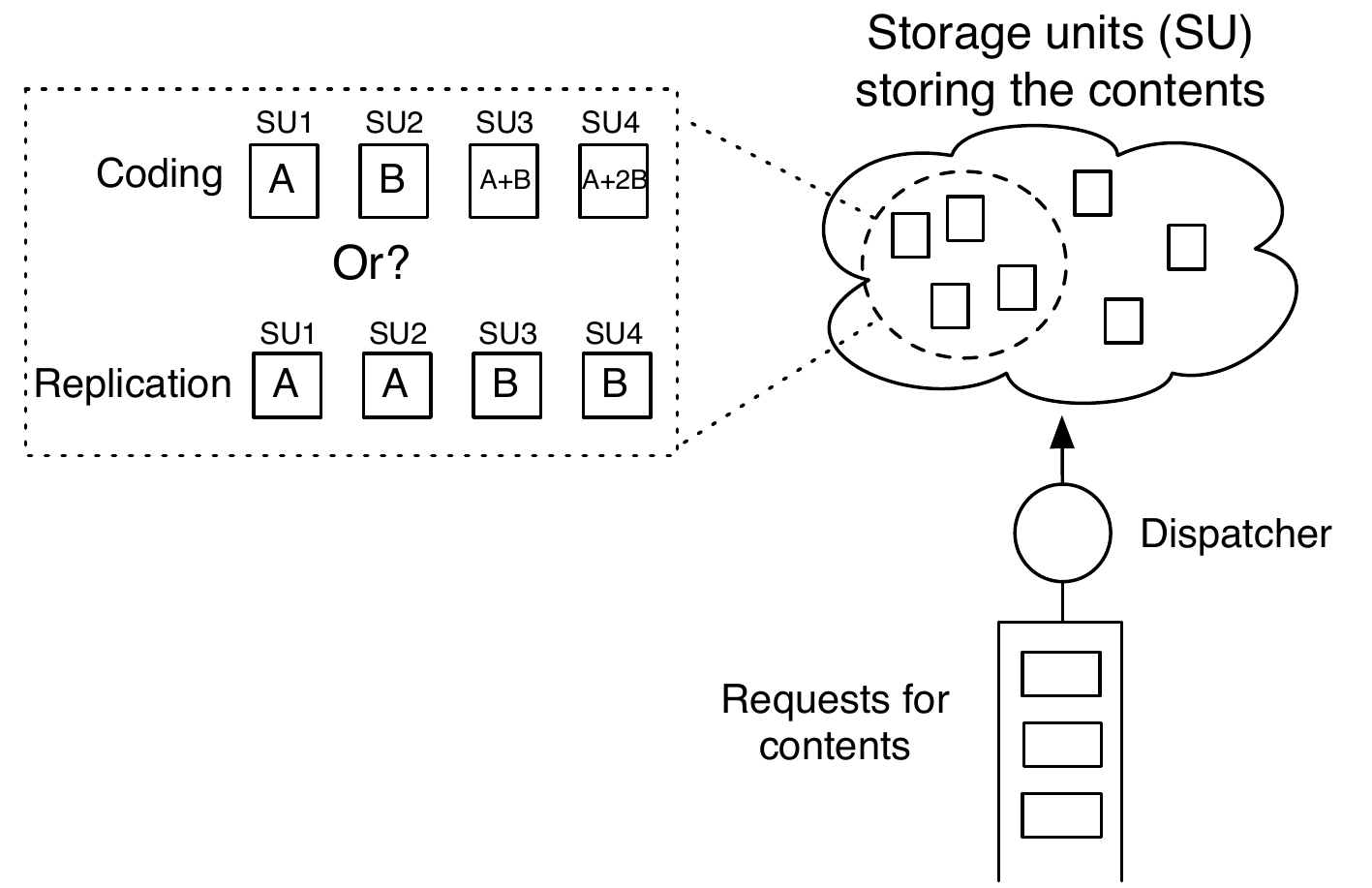}
\vspace{-.1in}
\caption{An example information storage system storing  a file that consists of two packets $A$ and $B$. 
Requests for contents are assigned by a central dispatcher to available storage units. In order to reduce the content retrieval latency, we can either use \emph{replication}, i.e., store the packets $A, B$ twice, or \emph{coding}, i.e., store coded packets of $A$ and $B$. } 
\label{fig:example-intro}
\vspace{-.12in}
\end{figure}

\paragraph*{Related Work}
The authors in \cite{atilla-netcode-tit08} and \cite{yeow-infocm-09} study the throughput-delay gains of network coding in a single hop wireless downlink with unreliable channels. The authors of \cite{code-deadline-lin-10} consider the network coding gains in throughput when packets have hard delay deadlines. The work of \cite{alex-delay-linenet09} studies the gains in delay-throughput when using network coding over a linear network with unreliable links. In \cite{ali-poe-2012}, the authors use network coding to prevent underflow in a multi-media streaming application. While most of the earlier works focus on use of codes to achieve the best effort throughput and or delay over unreliable wireless channels, non-asymptotic and theoretical analysis of queueing delay in coded systems remains largely unaddressed, to the best of our knowledge. 

The paper is organized as follows. In Section \ref{section:model}, we state our system model. In Section \ref{section:schemes}, we present the schemes that will be used in the uncoded and coded systems, and present the Blocking-one Scheduling (\tsf{BoS}) algorithm.  We analyze the performance of \tsf{BoS} in Section \ref{section:analysis}. We then conclude the paper in Section \ref{section:conclusion}.

\section{System Model}\label{section:model}
We consider an information storage system that consists of $n$ homogeneous storage servers, called storage units. Each SU has a storage capacity of one, and is capable of serving each incoming content request in a time that is exponentially distributed with mean $\mu = 1$. The system stores and serves a set of contents, denoted by $\script{C}=\{1, 2, ..., C\}$. Each content is striped into $k$ packets of size one. A total of $k$ SU is needed to store a single content. In practice, for reliability and availability purposes, a content is stored redundantly on multiple servers.

Assume the \emph{content retrieval requests} for each content $c$ form a Poisson arrival process with rate $\lambda_c$. Since a content is striped into $k$ packets that are stored on distinct SUs, every request needs to be served at $k$ servers with distinct packets in order to fully retrieve the desired content. We model this behavior by duplicating each arrived request into $k$ \textit{packet-requests} and by making sure that no two packet-requests are processed by servers with identical packets.

There is a central dispatcher that delegates the incoming requests to the SUs. Upon arrival, the requests are first queued at the central dispatcher, and then sent to the servers once they become available\footnote{Note that such shared queue models have also been widely used to study data center problems, e.g.,\cite{server-farm-with-cost10} \cite{data-size-impact-power11}.}. In such a scenario, we are interested in quantifying the reduction in average content delay  between a coding-based system and a replication-based system.

To illustrate the problem further, consider a simple example depicted in the Fig. \ref{fig:4-server-ex}. The system contains 4 storage units and 1 content. The content is striped into 2 packets $A$ and $B$, and is stored with a redundancy factor of $2$. Each arriving {  content} request {brings into the system two packet-requests, e.g., $3A,3B$.} In Fig. \ref{fig:4-server-ex}(a) we show a system that uses replication to introduce redundancy, where packets $A$ and $B$ are replicated twice. In contrast, Fig. \ref{fig:4-server-ex}(b) shows a system that uses a MDS code of rate $0.5$ to introduce the desired redundancy. The question we aim to answer in this paper is { \emph{by how much can we reduce the average delay of a coding-based system over a replication-based one?}}
\begin{figure}[htbp]
\vspace{-.1in}
\centering
\includegraphics[width=3.3in, height=1.6in]{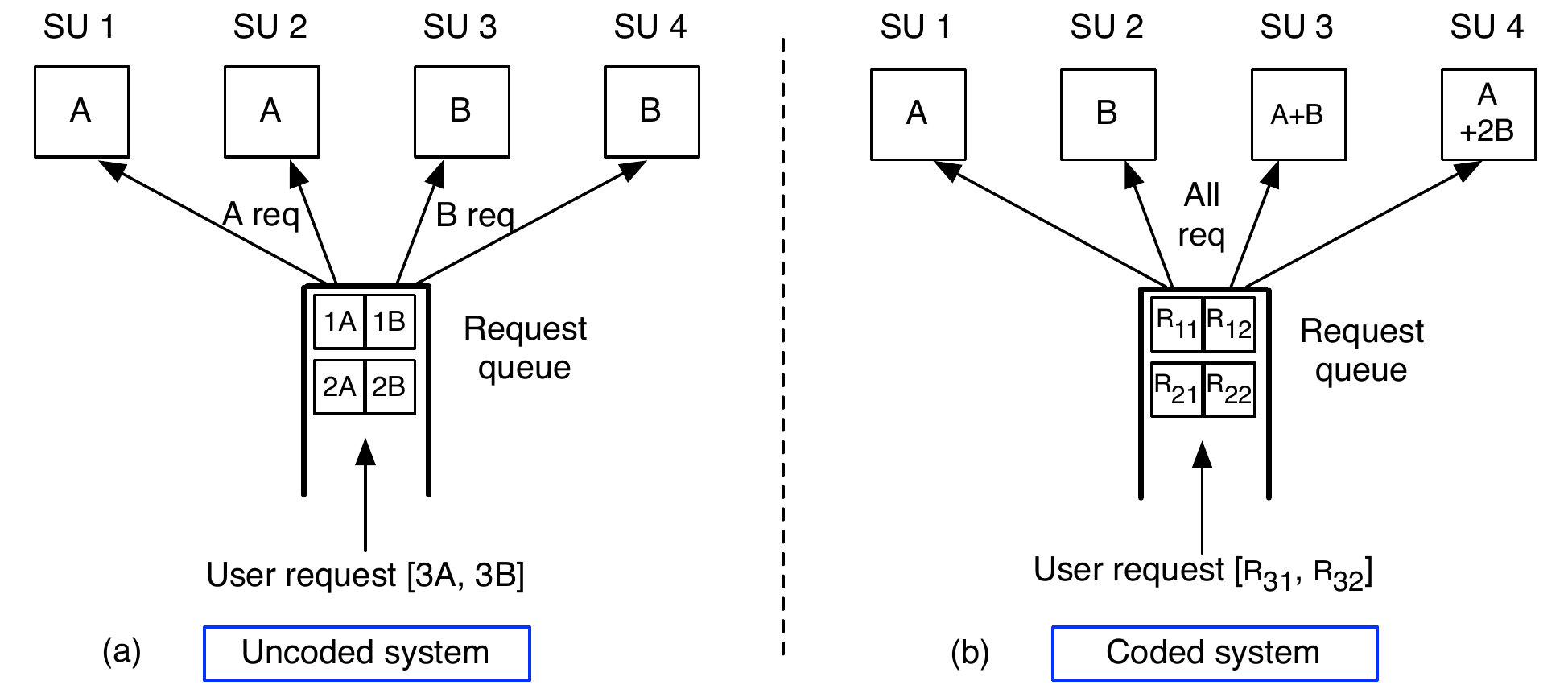}
\vspace{-.1in}
\caption{An example storage system with 4 storage units (SU). The letter inside the SU box corresponds to the packet it stores. Each request arrival brings two packet requests into the system, which must be served by $2$ SUs with distinct packets. A single queue is maintained for all the requests. (a) shows the uncoded system, where the packet $A$ requests are served by SU $1$ and $2$, and the packet $B$ requests are served by SU $3$ and $4$. (b) shows the coded system, where any two distinct SUs can serve the both packet requests $A$ and $B$.}
\label{fig:4-server-ex}
\vspace{-.1in}
\end{figure}

In general, quantifying the delay performance of a system with multiple contents, multiple servers and multiple coded packets that are replicated multiple times can be quite challenging.
{   The main difficulty lies in the fact that the SUs now store coded data, thus the scheduling algorithm has to ensure that the requests for  the same content are not served by a  same SU.
This means that the scheduling algorithm may need to remember at which SUs all the earlier requests are served.
With such a long memory, even defining an appropriate system state  is very hard,  let alone analyzing it. 
}
To make the analysis tractable, we focus on a small-sized problem with a single content, $2$ packets, and $n$ servers. We provide a theoretical upper bound on the average content retrieval delay performance of the coded system, which we show still beats that of a replication-based system with identical amount of SU resources. We believe that the methodology and algorithms developed in this paper will provide useful insights to more general cases, which are part of our ongoing work. 

\section{Storage and scheduling schemes}\label{section:schemes}
In this section, we present the storage and scheduling schemes for both uncoded
and coded systems. We assume that \emph{both systems have identical resources}: each
system hosts a single content that is divided into $k=2$ packets, and has $n=2r, r
\geq 2$ SUs, each capable of storing $1$ packet. Each server can serve requests
with rate of $\mu=1$. Here $r$ is a redundancy factor in the system for both
content availability as well as reliability of the content.

\vspace{-.1in}
\subsection{Uncoded system}
In this case, due to inherent symmetry of arrivals of sub-requests for packets
$A,B$, $r$ SUs  store packet $A$ and the remaining $r$  store packet $B$. Then, whenever  there is
an idle SU that stores packet $A$, the packet $A$ request from the head-of-line request  is assigned to this server. The same happens for  packet $B$ requests.

Under this setting, the uncoded system can be modeled as two $M/M/r$ queueing systems (See Fig. \ref{fig:4-server-ex}). 
\footnote{Note that the two separate systems are indeed not independent, since
the arrivals to the systems happen at the exact same time. However, this does
not affect the analysis for the average request delay. } 
Now denote by $\pi_i$ the steady-state probability that there are $i$ packet requests in an $M/M/r$ system, and denote $\rho\triangleq\frac{\lambda}{r\mu}$. We note that $\{\pi_0,\pi_1,\hdots,\}$ can be computed as follows \cite{data-net-bertsekas}.  
\begin{eqnarray}
\pi_0 &=&\bigg( \sum_{m=0}^{r-1} \frac{(r\rho)^m}{m!} + \frac{(r\rho)^r}{r!}\cdot \frac{1}{1-\rho}\bigg)^{-1}, \\
\pi_m &=&  \frac{(r\rho)^m}{m!}\pi_0, \,\,\,\forall\,\, m\in\{1,\hdots, r\},\\
\pi_{r+m} &=& \rho^m \frac{(r\rho)^r}{r!}\pi_0,\,\,\,\forall\,\,m\geq1.
\end{eqnarray}
And the average packet request delay $d^{\text{uncoded}}_{\text{packet}}$ can be computed by:
\begin{eqnarray}
d^{\text{uncoded}}_{\text{packet}} = \frac{\sum_{n=0}^\infty n\pi_n}{\lambda}. \label{eq:little-law}
\end{eqnarray}
Although the uncoded system admits an easy analysis of the average packet 
request delay, we see that finding the average request delay can be quite
challenging. 
However, as we will see in the coded system, the average request delay can be 
easily computed under our algorithm. 


\vspace{-.1in}
\subsection{Coded system: Blocking-One Scheduling (BoS)}
We now specify our coding and scheduling schemes for the coded system. 
We have $n=2r$ SUs and $k=2$ packets $A,B$. We adopt a simple linear $(n,2)$ MDS
code (any family of MDS code will do) to generate $n$ encoded packets that are
stored at each of the SU. Due to the MDS nature of the code,  any request that is
served at \emph{any two distinct SUs will be able to retrieve the full content}, e.g., see Fig.~\ref{fig:4-server-ex}(b).
%
Under such a coding scheme, in order to minimize average packet request delay, a
simple greedy scheduling strategy would be: queue all the requests in a single
queue. Whenever there is an idle SU that can serve any request in the queue,
assign the request to that SU. Although the suggested greedy scheme seems
simple, it has inherent memory/dependency in scheduling the requests due to the
use of codes. For example, if the first packet request of the $i^{th}$ request, denoted by $R_{i1}$, is
served at SU $j$, then the greedy scheduler has to remember not to assign the other 
packet request $R_{i2}$ to SU $j$ even if it becomes idle. This dependency builds
infinite memory into the system, which makes it very challenging to exactly
analyze the average delay performance of requests in the coded system.

In order to resolve this difficulty,  we propose a novel scheduler called \tsf{Blocking-one Scheduling (BoS)}, for the coded system. The main idea of \tsf{BoS} is to break the memory in the scheduler by blocking
the requests beyond the head-of-line request until both the packet requests of the head-of-line  request are
served.  
The BoS scheduler also corresponds to first-come-first-serve (FCFS).
%
As we will see, the \tsf{BoS}  algorithm not only greatly simplifies the
analysis of the packet request delay, but also allows us to directly calculate 
the average request delay. 
%
\begin{algorithm}
\caption{\tsf{Blocking-one Scheduling  (BoS)}} \label{algorithm:boa}
\begin{algorithmic}[1]
\STATE  At any time $t$, denote the set of idle SUs as
$\script{S}_{\text{Idle}}$, do:
\begin{enumerate}
\item If $\script{S}_{\text{Idle}}\neq\phi$, assign the packet requests from the head-of-line request to an idle SU in $\script{S}_{\text{Idle}}$ as follows: 
\begin{itemize}
\item If  packet request $1$ has not yet been assigned, assign it to the idle SU. 
\item Else assign packet request $2$ if corresponding packet request $1$ was not
served by the idle server.
\end{itemize}

\item If a packet request is assigned to the SU, change the state of SU from idle to busy and remove it from $\script{S}_{\text{Idle}}$, and remove the packet request from queue.
\item Repeat step 1) until no further assignment can be made.
\end{enumerate}

\end{algorithmic}
\end{algorithm}

Note that under \tsf{BoS}, it can happen that there exists an idle SU but no assignment is  made even when the
queue is non-empty. For example, when the free SU has served the packet request $1$ of the head-of-line request and no other SU is idle. In this case, 
the requests beyond the head-of-line request are ``blocked.'' Due to this blocking effect, there
will be a throughput loss due to the lost scheduling opportunity. Fortunately,
as we will see later, the fraction of throughput loss is $O(\frac{1}{r^2})$
that goes to zero as the $r$ increases. Indeed, even for $r=2$, the \tsf{BoS}
achieves $96\%$ of the maximum system throughput.

\section{Analyzing the \tsf{BoS} algorithm}\label{section:analysis}
We now analyze the \tsf{BoS} algorithm by finding the steady-state distribution of the coded system under \tsf{BoS}. The approach works as follows.  We first derive the continuous-time Markov chain that captures the system evolution. Then, we analyze the Markov chain by carefully choosing a set of global balance equations that allow us to compute the steady-state distribution.
\begin{figure*}[cht]
\centering
\includegraphics[width=6in, height=2.2in]{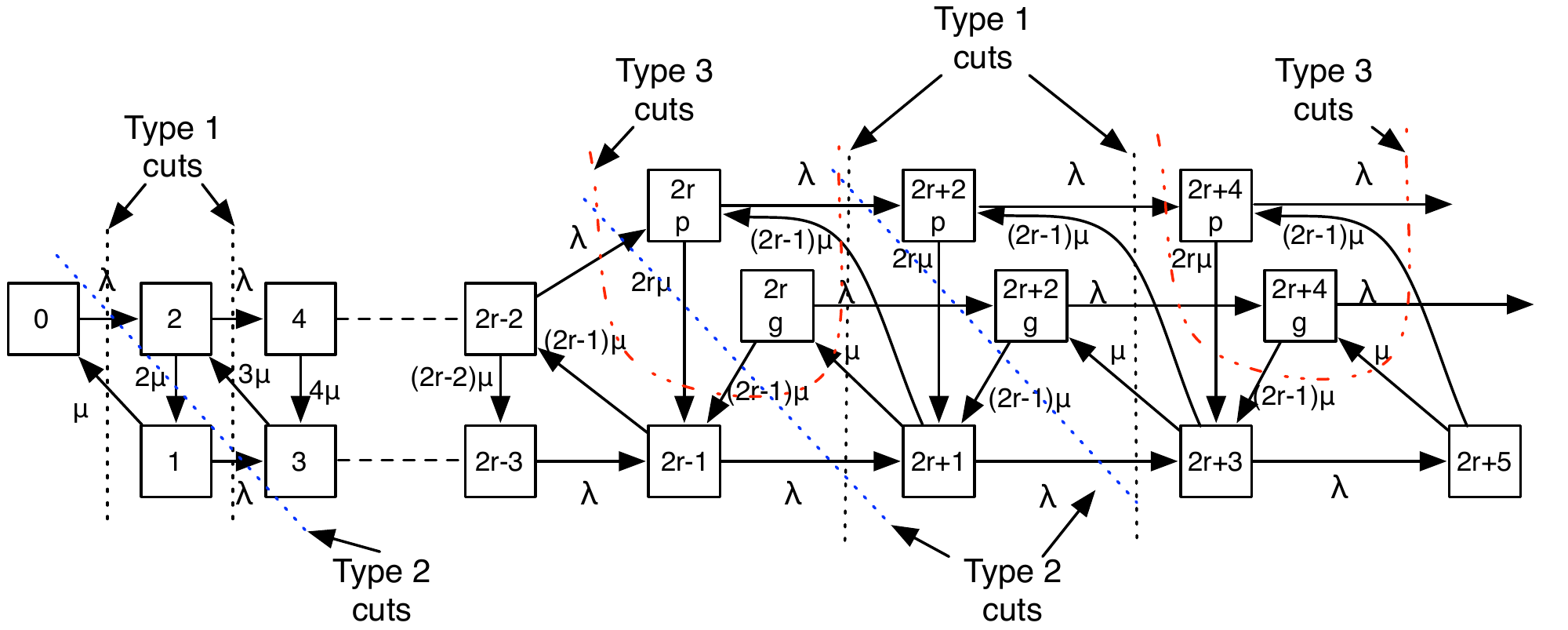}
\caption{The continuous-time Markov chain of the system with $2r$ storage units. The request arrival rate is $\lambda$ and the service rate of each storage unit is $\mu$. 
Each state of the chain represents the number of packet requests in the system. The letters ``p'' and ``g'' represent the perfect and good states. The cuts are used to specify the global balance equations we will use in our analysis. 
} 
\label{fig:mc-system}
\vspace{-.15in}
\end{figure*}

\subsection{The system evolution}
In this section, we present the Markov chain that models the system evolution.
Towards that end, we first take a closer look at the state evolution of the
example system in Fig.\ref{fig:4-server-ex} (b) with 4 SUs.
\begin{figure}[cht]
\vspace{-.1in}
\centering
\includegraphics[width=3.3in, height=1.3in]{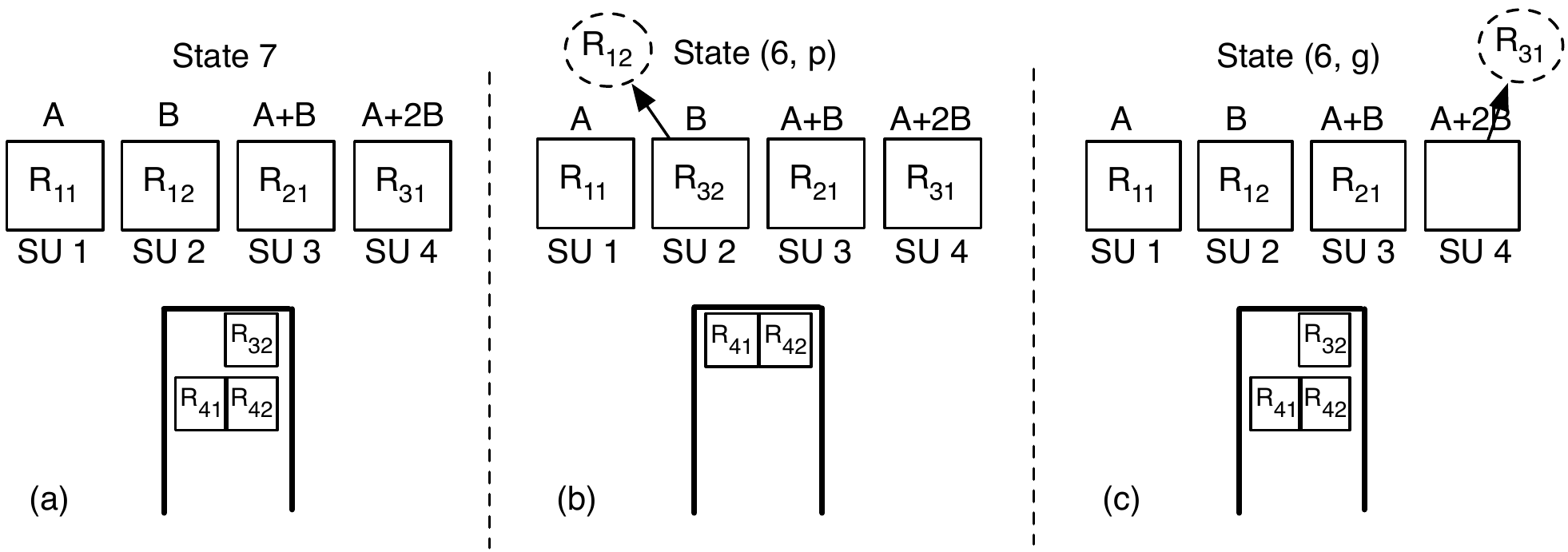}
\vspace{-.1in}
\caption{System evolution of an example with $4$ storage units (SU). (a) shows a state with $7$ packet requests in the system with an aggregate service rate of $4\mu$.  (b) shows the resulting state after the departure of $R_{12}$ from SU $2$, denoted by $(6, p)$, where the system ``renews'' itself and every queued requests can go to any SU.  (c) shows the resulting state after the departure of $R_{31}$ from SU $4$, denoted by $(6,g)$. In this case, one SU is ``wasted'' due to blocking, and the system serves requests with rate $3\mu$.
}
\label{fig:state-explanation}
\end{figure}
Suppose the system is in a  state as shown in Fig. \ref{fig:state-explanation}
(a). In this case, the $4$ SUs are serving  packet requests $R_{11},R_{12},R_{21}$,
and $R_{31}$. There are three more packet requests $R_{32}, R_{41}$ and $R_{42}$ in the queue. Thus, the total
number of packet requests in the system is $7$. We denote the state of the system
by the total number of packet requests in the system i.e., state is $7$.

Now, if SU $2$ completes its service, the packet request $R_{32}$ can be
assigned to SU $2$. This results in a state $(6,p)$ as shown in the Fig.
\ref{fig:state-explanation} (b), which we refer to as a ``perfect-state." This
state is called ``perfect" because once in this state, past evolution is
irrelevant in determining the future scheduling events and the total service
rate is maximum i.e., $4\mu$.  
Note that the system will also enter $(6, p)$ if either SU $1$ or $3$ completes its 
service. Hence, the transition rate from state $7$ into $(6, p)$ is $3\mu$. 
However, if starting from state $7$ but 
SU $4$ completes the service, then we cannot assign request $R_{32}$ to SU $4$,
since $R_{31}$ was served at SU $4$. 
In this case \tsf{BoS} will block the
requests $R_{41},R_{42}$ until $R_{32}$ gets assigned to some other SU. This
results in a not so perfect state, which we denote by $(6,g)$, i.e., good
state with $6$ packet requests in the system (see Fig.\ref{fig:state-explanation}
(c)).  
In this good state, the aggregate service rate is $3\mu$. %
Note that different from $(6, p)$, the system enters $(6, g)$ from state $7$ only when 
SU $4$ finishes its service. Hence, this transition happens only with rate $\mu$. 

Although \tsf{BoS} performs sub-optimally as compared to a system with codes
that maintains an infinite memory and assigns $R_{41}$ to SU 4 when it becomes
idle, it permits analytical treatment of average request delay. 
%
It can be verified that this blocking situation appears only when the system is in a 
state that has more than $2r$ packet requests in the system, and the number of 
packet requests is even.  
Thus, to characterize the system state, we introduce the suffix ``p'' and
``g,'' which stand for ``perfect'' and ``good,'' for all the states with $2r+2m, m\geq0$ packet requests in the system.  
When the number of
packet requests in the system is less than $2r$, we simply use the total number of packet requests
in the system to denote the state of the system.

Now that the system states have been defined, the Markov chain in Fig.
\ref{fig:mc-system} explains the evolution of the system under \tsf{BoS}. The
chain can be understood as follows:
\begin{itemize}
\item With rate $\lambda$, there is an request arrival event,  and two new packet requests are added to the
system. If system was in a good state before arrival it remains in the good
state after arrival and likewise for a perfect state.
\item For any state $<2r$ transition rate for service completion is equal to $\mu$ times the state.
\item For any odd state $>2r$ there are two outgoing transitions for service
completion:
\begin{enumerate}
\item With rate $(2r-1)\mu$, there is transition to an even perfect state.
\item With rate $\mu$, there is transition to an even good state.
\end{enumerate}
\item From an even-perfect state $\geq 2r$, there is a service completion transition to an odd state with rate $2r\mu$.
\item From an even-good state $\geq 2r$, there is a service completion transition to an odd state with rate $(2r-1)\mu$.
\end{itemize}

\subsection{Performance of \tsf{BoS}: Average packet request delay}
Here we present the performance results of \tsf{BoS}. 
To state the theorem, we first define a few notations: 
\begin{eqnarray*}
\hspace{-.3in}&&\eta\triangleq \frac{\lambda}{2r\mu} +\frac{\lambda(2r-1)\mu}{(2r\mu)^2} + \frac{\lambda\mu}{(2r-1)\mu2r\mu}, \\
\hspace{-.3in}&&\gamma_p \triangleq  \frac{2r\mu(\lambda+(2r-1)\mu)}{\mu}  + (\lambda+2r\mu), \\
\hspace{-.3in}&&\gamma_g \triangleq  \frac{-(2r-1)\mu(\lambda+2r\mu)}{2r\mu} - (2r-1) (\lambda +(2r-1)\mu),\nonumber\\
\hspace{-.3in}&&\beta_p \triangleq \frac{\lambda (\lambda + (2r-1)\mu)}{\mu}, \,\,\beta_g \triangleq  \frac{-\lambda(\lambda+2r\mu)}{2r\mu}.
\end{eqnarray*}
Also, for $l\in\{0, ..., 2r-1\}$, define $a_l$ as follows: 
\begin{eqnarray}
a_0=1, a_1 = \frac{\lambda}{\mu}a_0, a_l = \frac{\lambda}{l\mu} (a_{l-1}+a_{l-2}). \label{eq:a-def}
\end{eqnarray}
Now denote by $\pi_0, ..., \pi_{2r-1}, \pi_{2r}^p, \pi_{2r}^g, \pi_{2r+1}, ...$ the stationary distribution of the Markov chain in Fig. \ref{fig:mc-system}, where superscripts  ``p'' and ``g'' stand for perfect and good states. We now have the following theorem. 


%
\begin{theorem}\label{theorem:BoS-per}
Under \tsf{BoS}, we have the following:
\begin{enumerate}
\item[(a)] The maximum rate the coded system can support is:
\begin{eqnarray}
0\leq \lambda < r\mu \bigg(1- \frac{1}{8r^2 - 4r + 1}\bigg).
\label{eq:BoS-rate}
\end{eqnarray}

\item[(b)] If the system is stable, i.e., (\ref{eq:BoS-rate}) is satisfied, the steady state probabilities can be computed by the following iterative process:
\begin{eqnarray*}
\hspace{-.5in}&&\pi_0 = \frac{1-\eta}{(1-\eta)\sum_{l=0}^{2r-2}a_l + \frac{\lambda a_{2r-2}}{2r\mu} + a_{2r-1}}, \\
\hspace{-.5in}&& \pi_l=a_l \pi_0, \,\forall\, l\in\{1, ..., 2r-1\}, \\ 
\hspace{-.5in}&& \pi_{2r+2m-1}  = \frac{\lambda}{2r\mu} (\pi_{2r+2m-2}^{p} + \pi_{2r+2m-2}^g + \pi_{2r+2m-3}), \\
\hspace{-.5in}&&\pi_{2r}^{p} = \frac{1}{\gamma_p} \big[\beta_p (\pi_{2r-1} +\pi_{2r-2}) + \lambda \pi_{2r-2}\big] \\ 
\hspace{-.5in}&&\pi_{2r+2m}^{p}= \frac{1}{\gamma_p}  \big[  \beta_p (\pi_{2r+2m-2}^{p} + \pi_{2r+2m-2}^{g} + \pi_{2r +2m-1}) \\
\hspace{-.5in} && \qquad\quad+\lambda\pi_{2r+2m-2}^{p} -(2r-1)\lambda \pi_{2r+2m-2}^{g} \big], \,\,\forall\,\,m\geq1. 
\end{eqnarray*}
$\pi_{2r}^{g}$ and $\pi_{2r+2m}^{g}$ can be similarly computed by replacing $\gamma_p, \beta_p$ with $\gamma_g, \beta_g$.  $\Diamond$

\end{enumerate}
\end{theorem}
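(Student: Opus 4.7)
The plan is to derive all stated formulas directly from the global balance equations of the Markov chain in Fig.~\ref{fig:mc-system}, organized around the dashed cuts shown in that figure, and then extract part~(a) as the positive-recurrence condition of the resulting tail recursion.

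First I handle the low states $l \in \{0,1,\ldots,2r-1\}$. For each $l \geq 1$ cut the chain between $\{0,\ldots,l-1\}$ and $\{l,l+1,\ldots\}$: only service completions at state $l$ cross downward, at aggregate rate $l\mu\pi_l$, while the upward crossings consist of the two arrival transitions from $l-1$ and from $l-2$ (an arrival adds two packet requests, so it skips across the cut either from $l-2$ landing on $l$ or from $l-1$ landing on $l+1$). Using $\pi_{-1}=0$, the balance reads $l\mu\pi_l = \lambda(\pi_{l-1}+\pi_{l-2})$ for $l\ge 2$ and $\mu\pi_1=\lambda\pi_0$, which is precisely recursion~\eqref{eq:a-def}, yielding $\pi_l=a_l\pi_0$. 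Next, for the odd tail, cut between $\{\ldots,(2r{+}2m,p),(2r{+}2m,g)\}$ and $\{2r{+}2m{+}1,\ldots\}$: the downward flow is purely the aggregate rate-$2r\mu$ service transition from the odd state, while the upward flow consists of arrivals from $(2r{+}2m,p)$, $(2r{+}2m,g)$, and from $2r{+}2m{-}1$, again because arrivals skip the even layer. This yields directly $2r\mu\,\pi_{2r+2m+1}=\lambda(\pi^p_{2r+2m}+\pi^g_{2r+2m}+\pi_{2r+2m-1})$, i.e.\ the stated recursion for the odd-tail probability.

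To split p and g I write local state-balance at $(2r{+}2m,p)$ and at $(2r{+}2m,g)$. In-flows to the $p$-state are arrivals from $(2r{+}2m{-}2,p)$ (arrivals preserve the $p/g$ tag) together with the rate-$(2r-1)\mu$ service transitions from both neighboring odd states $2r{+}2m{-}1$ and $2r{+}2m{+}1$; the out-flow is $\lambda+2r\mu$. The $g$-analog is identical except the service rate into $g$ is $\mu$ (not $(2r{-}1)\mu$) and the out-flow is $\lambda+(2r{-}1)\mu$. Substituting the odd-state expression from the previous paragraph to eliminate $\pi_{2r+2m+1}$ in the $p$-equation and then collecting $\pi^p_{2r+2m}$ on the left produces exactly the coefficient $\gamma_p = 2r(\lambda+(2r{-}1)\mu)+(\lambda+2r\mu)$, and the right-hand side collapses into the displayed combination with $\beta_p=\lambda(\lambda+(2r{-}1)\mu)/\mu$ and the correction $-(2r{-}1)\lambda\pi^g_{2r+2m-2}$ that comes from the asymmetric way the two odd $\to$ even split rates enter. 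The identical elimination in the $g$-equation produces the $\gamma_g,\beta_g$ version with the opposite sign pattern, and specializing to $m=0$ with $\pi^p_{2r-2}=\pi_{2r-2}$, $\pi^g_{2r-2}=0$ reproduces the stated base formula. I expect this third step to be the main obstacle: the two eliminations intertwine, and one has to keep careful track of which arrivals preserve the $p/g$ tag and which service rates belong to which branch in order to land on the exact constants. A useful check is that summing the resulting $p$- and $g$-formulas must reproduce the cut equation $2r\mu\pi^p_{2r+2m}+(2r{-}1)\mu\pi^g_{2r+2m}=\lambda(\pi^p_{2r+2m-2}+\pi^g_{2r+2m-2})+\lambda\pi_{2r+2m-1}$ obtained by cutting just below the even level.

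Finally, the three tail recursions together form a linear vector recurrence $y_m = A\,y_{m-1}$ in $y_m=(\pi^p_{2r+2m},\pi^g_{2r+2m},\pi_{2r+2m+1})^\top$. Positive recurrence of the chain is equivalent to the spectral radius of $A$ being strictly less than $1$, and a direct eigenvalue computation (or equivalently, trying a geometric ansatz $y_m=\eta^m y_0$ and reading off the resulting scalar) shows that the dominant root is exactly $\eta$. Writing the condition $\eta<1$ as $\rho(8r^2{-}4r{+}1) < 4r^2(2r{-}1)$ with $\rho=\lambda/\mu$ and rearranging yields the bound $\lambda < r\mu\bigl(1-\tfrac{1}{8r^2-4r+1}\bigr)$ of~\eqref{eq:BoS-rate}, establishing part~(a). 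For part~(b), once stability holds the tail sum is geometric with ratio $\eta$ and evaluates in closed form; combining this with the low-state mass $\pi_0\sum_{l=0}^{2r-1}a_l$ and enforcing $\sum\pi=1$ pins $\pi_0$ uniquely to the displayed expression, after which every remaining $\pi$ is obtained by running the recursions forward.
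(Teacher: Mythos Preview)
Your low-state and odd-state cut derivations are correct and match the paper. The remaining two steps, however, have real problems.

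\textbf{The $p/g$ split.} Your local balance at $(2r{+}2m,p)$ is wrong: there is no service transition from the lower odd state $2r{+}2m{-}1$ into $(2r{+}2m,p)$, since every service completion decreases the packet count. The only in-flows are the arrival from $(2r{+}2m{-}2,p)$ and the rate-$(2r{-}1)\mu$ service from $2r{+}2m{+}1$; this is the paper's equation~(\ref{eq:a-cut2}). Even with the correct balance, your proposed elimination (substituting the odd recursion for $\pi_{2r+2m+1}$) does not isolate $\pi^p_{2r+2m}$: it leaves $\pi^g_{2r+2m}$ on the right, and the resulting coefficient in front of $\pi^p_{2r+2m}$ is $(\lambda+4r^2\mu)/(2r)$, not $\gamma_p$. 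What the paper does instead is combine the $p$- and $g$-local balances (\ref{eq:a-cut2})--(\ref{eq:b-cut2}) to cancel $\pi_{2r+2m+1}$, pair the result with the diagonal cut (\ref{eq:blue-cuts2}), and solve the resulting $2\times 2$ linear system for $(\pi^p_{2r+2m},\pi^g_{2r+2m})$; the constants $\gamma_p,\gamma_g,\beta_p,\beta_g$ are precisely the Cramer coefficients of that system.

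\textbf{Stability and $\pi_0$.} Your spectral-radius route is a genuine departure from the paper, but as written it is only an assertion: you claim the dominant eigenvalue of the $3\times 3$ tail matrix equals $\eta$ and that ``the tail sum is geometric with ratio $\eta$,'' neither of which is shown, and the second is generally false since the solution mixes all eigenmodes. The paper bypasses eigenvalues entirely. It sums each family of balance equations (\ref{eq:red-cuts1})--(\ref{eq:b-cut2}) over $m$ so that the tail telescopes, yielding closed linear relations among $\sum_m\pi^p_{2r+2m}$, $\sum_m\pi^g_{2r+2m}$, $\sum_m\pi_{2r+2m+1}$, and the low-state probabilities; combining these with normalization gives the single scalar equation~(\ref{eq:starting-pis}) in $\pi_0,\ldots,\pi_{2r-1}$ alone. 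The condition $\eta<1$ then appears simply as $\pi_0>0$, and the displayed $\pi_0$ formula drops out immediately. If you insist on the matrix route you must actually compute the characteristic polynomial of $A$ and argue which root governs summability, which is more work than the telescoping argument.
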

\begin{proof}
See Appendix A.
\end{proof}
Note that a system with $2r$ servers should ideally support any arrival rate $0\leq\lambda<r\mu$. 
%
However, we see from equation \eqref{eq:BoS-rate} of Theorem
\ref{theorem:BoS-per} that, under \tsf{BoS}, the there is a loss in throughput
of order $O(\frac{1}{r^2})$. 
This loss quickly goes to zero as $r$ increases.
Thus, \tsf{BoS} indeed ensures high throughput of the coded system even for
moderate values of $r$. 
%
%
Part (b) of Theorem \ref{theorem:BoS-per} then provides
an efficient way for analyzing the system.
We also note that our results are not asymptotic and can be applied to systems of small sizes. 

Using the approach in Theorem  \ref{theorem:BoS-per}, we can compute the average packet request delay in the system using the equation (\ref{eq:little-law}).
We then compute the delay gain of coding by: 
\begin{eqnarray}
\text{Delay Gain} = (d^{\text{uncoded}}_{\text{packet}}-d^{\text{coded}}_{\text{packet}})/d^{\text{uncoded}}_{\text{packet}}.
\end{eqnarray} 
Fig. \ref{fig:delay-gain} shows the delay gain for different values of $r$. We
see that the gain is significant even for small $r$, e.g., gain is $13\%$ for
$r=4$, and can be up to $17\%$ when $r=10$.
Finally, we emphasize that Fig. \ref{fig:delay-gain} is obtained \emph{analytically}, computed using results in Theorem  \ref{theorem:BoS-per}. 
We note that in Fig. \ref{fig:delay-gain}, for every $r$ value, delay gain is only plotted for arrivals that are within the capacity region of the system, i.e., (\ref{eq:BoS-rate}). 

\begin{figure}[cht]
\vspace{-.1in}
\centering
\includegraphics[width=3.3in, height=2in]{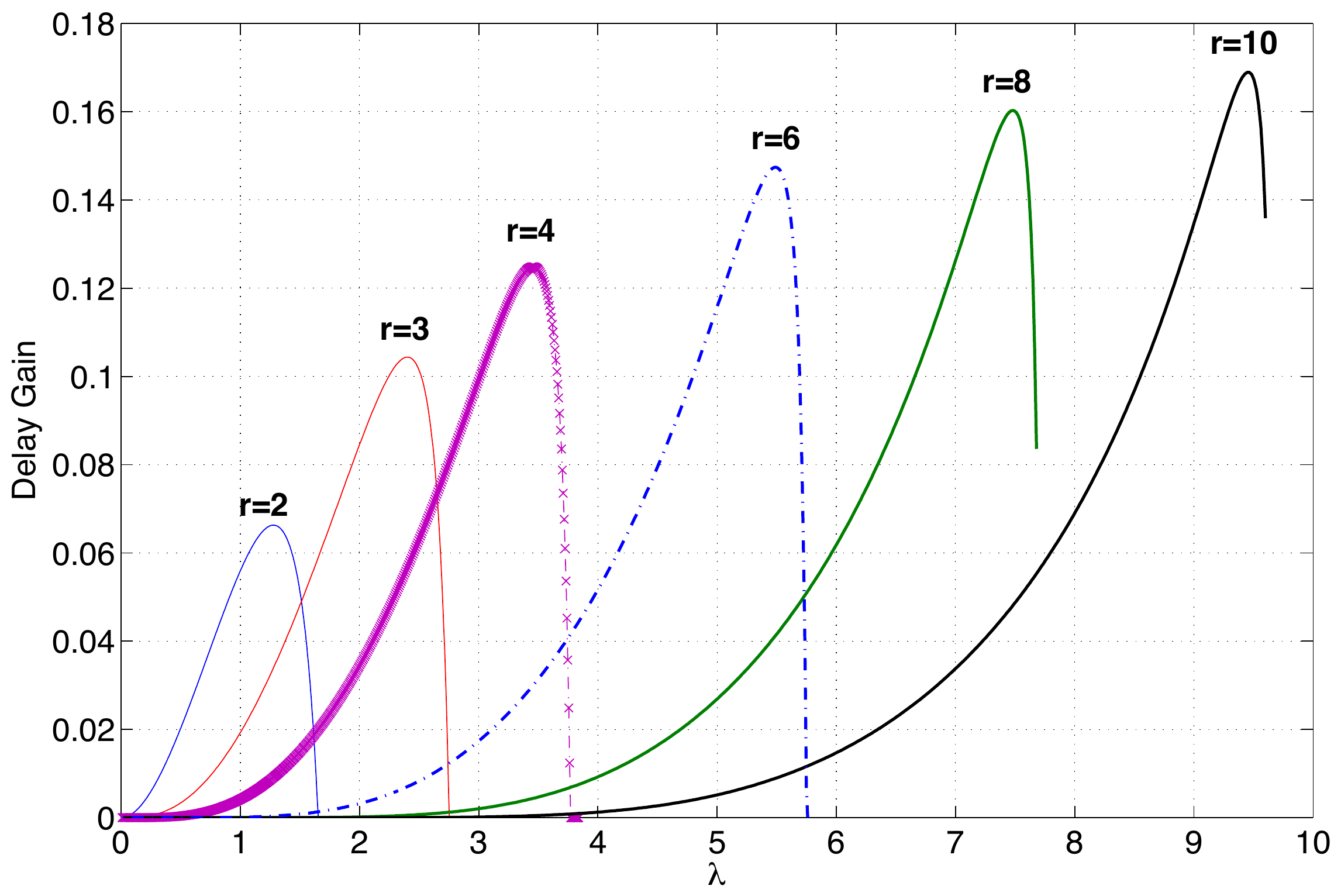}
\vspace{-.15in}
\caption{ Delay reduction of \tsf{BoS} over the replication scheme. When $r=4$, the delay gain is around $13\%$, and when $r=10$, the gain is $17\%$. 
The plots are generated using analytical results in Theorem  \ref{theorem:BoS-per}. 
The reason the gain decreases as the arrival rate increases is because \tsf{BoS} has a slightly smaller capacity region. Thus, if the rate is very close to the capacity region of \tsf{BoS}, the delay under \tsf{BoS} will be larger than that under the replication scheme. However, this happens only when the rate is very close to the capacity boundary. For most of the rates, codes achieve a significant delay reduction. 
%
}
\label{fig:delay-gain}
\vspace{-.1in}
\end{figure}

\vspace{-.2in}
\subsection{Average request delay}
Notice that the above analysis allows us to derive the average \emph{packet
request} delay. However, in practice, we care more about the average \emph{request}
delay, which is defined as the average time it takes for both packet 
requests of a request to get served. Our following lemma shows that under \tsf{BoS}, the average request
delay is roughly equal to the packet delay. This is a very desired feature not
possessed by the uncoded system.

\begin{lemma} \label{lemma:per-user-per-req}
Let $d^{\text{coded}}_{\text{req}}$ and $d^{\text{coded}}_{\text{packet}}$ be the average request delay and average packet request delay under \tsf{BoS}, then:
\begin{eqnarray}
d^{\text{coded}}_{\text{req}} &=& d^{\text{coded}}_{\text{packet}} +  \frac{2r-1}{2r-2} \frac{1}{2\mu} \label{eq:avg-user-request} \\
&&\quad  - \frac{1}{(2r-2)(2r-1)2r\mu}  - \frac{1}{2(2r-1)\mu}.\Diamond \nonumber
\end{eqnarray}
\end{lemma}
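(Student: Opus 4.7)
The plan is to reduce the problem to computing $E[|T_2 - T_1|]$ for a random focal request, where $T_1$ and $T_2$ denote the completion times of its two packet requests. The elementary identity $\max(a,b) = (a + b + |a - b|)/2$ immediately yields $d^{\text{coded}}_{\text{req}} - d^{\text{coded}}_{\text{packet}} = \tfrac{1}{2} E[|T_2 - T_1|]$, after subtracting the common arrival time and taking expectations, so I only need to pin down the distribution of $|T_2 - T_1|$ for a typical request under \tsf{BoS}.

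To this end, I would decompose $T_2 - T_1 = (\tau_2 - \tau_1) + (S_2 - S_1)$, where $\tau_i$ is the assignment time and $S_i \sim \text{Exp}(\mu)$ the service time of packet $i$ (labeling packet $1$ as the one \tsf{BoS} assigns first). The key step is to exploit the memoryless property: once packet $1$ has been assigned at $\tau_1$, packet $2$ must go to one of the remaining $2r - 1$ ``other'' servers, all busy with iid $\text{Exp}(\mu)$ remaining services, so $X := \tau_2 - \tau_1 \sim \text{Exp}((2r-1)\mu)$ independent of $S_1$ and $S_2$. I would then compute $E[|X + S_2 - S_1|]$ by conditioning on whether packet $1$'s service completes before or after $\tau_2$. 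In the event $\{S_1 < X\}$ (probability $\tfrac{1}{2r}$), a second memoryless step gives $E[|T_2 - T_1| \mid S_1 < X] = \tfrac{1}{(2r-1)\mu} + \tfrac{1}{\mu}$. In the complementary event (probability $\tfrac{2r-1}{2r}$), the remaining services of both packets from $\tau_2$ onward are iid $\text{Exp}(\mu)$, so $E[|T_2 - T_1| \mid X < S_1] = \tfrac{1}{\mu}$. Averaging yields $E[|T_2 - T_1|] = \tfrac{1}{2r(2r-1)\mu} + \tfrac{1}{\mu} = \tfrac{4r^2 - 2r + 1}{2r(2r-1)\mu}$, and dividing by $2$ reproduces the stated right-hand side after algebraic rearrangement.

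The main obstacle is justifying that $\tau_2 - \tau_1$ is genuinely $\text{Exp}((2r-1)\mu)$ rather than $0$: if the focal request became head-of-line at an instant when two or more SUs were idle, then \tsf{BoS} would assign both packets simultaneously, forcing $X = 0$ and giving the smaller value $E[|T_2 - T_1|] = 1/\mu$. I expect this to be resolved by a sample-path argument that uses the \tsf{BoS}/FCFS discipline together with the Markov-chain construction of Fig.~\ref{fig:mc-system}: a newly promoted head-of-line request only ever finds zero or exactly one idle server, the latter being the previously ``wasted'' packet-$1$ SU of the preceding head-of-line in a good-state transition. Making this promotion-epoch argument precise, rather than the exponential computation itself, is the delicate part of the proof.
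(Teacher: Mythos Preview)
Your reduction and the exponential computation are correct and essentially coincide with the paper's proof: the paper works directly with $Z=\max(S_1,\tau+S_2)$ (computing its density and then $E[Z]-\tfrac12(E[S_1]+E[\tau]+E[S_2])$), which is algebraically the same as your $\tfrac12 E[|X+S_2-S_1|]$ via the identity you quote. Both arguments rest on the single distributional claim $X=\tau_2-\tau_1\sim\exp((2r-1)\mu)$; the paper simply asserts this and moves on.

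Where you diverge from the paper is in flagging that claim as the delicate point --- and here your proposed resolution does not go through. Your promotion argument is fine for requests that were queued before becoming head-of-line: such a request indeed finds at most one idle SU (either zero, or the previously ``wasted'' SU from a good state). But it misses the other way a request can become head-of-line: by \emph{arriving} to a system with at most $2r-2$ packets present. By PASTA this happens with probability $\sum_{m=0}^{2r-2}\pi_m>0$, and in that event \tsf{BoS} assigns both packets simultaneously, so $X=0$ and $E[|T_2-T_1|\mid X=0]=1/\mu$. Mixing over the two cases gives
\[
d^{\text{coded}}_{\text{req}}-d^{\text{coded}}_{\text{packet}}
=\frac{1}{2\mu}+\frac{1-\sum_{m\le 2r-2}\pi_m}{4r(2r-1)\mu},
\]
which is strictly smaller than the RHS of the lemma whenever $\lambda<r\mu$. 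So the sample-path argument you sketch cannot establish the equality as stated; at best it yields the lemma's RHS as an upper bound (tight as $\lambda$ approaches capacity), which is all the paper actually needs for its delay comparison. The paper's own proof shares this gap --- it never addresses the $X=0$ event --- so you have matched it and in fact been more careful; just do not expect the promotion-epoch argument to close the gap, because it cannot.
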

\begin{proof}
See Appendix B. 
\end{proof}
We see from the lemma that as number of servers $2r$ gets large, the difference between the packet delay and the request delay roughly equals $\frac{1}{2\mu}$.
This is exactly the difference between the average  of two requests service times and the maximum of them.
Hence, it will appear under any scheduling policies regardless of using coding or not.
Therefore, Lemma \ref{lemma:per-user-per-req} shows that \tsf{BoS} ensures that the average packet latency is almost equal to the average request latency. 
This is a very important feature of the \tsf{BoS} algorithm.

\section{Conclusion}\label{section:conclusion}
In this paper, we pose a fundamental question of the role of codes in improving
the latency of content retrieval in storage systems. The interplay between
coding and queueing delay is of complex nature. As a first step to make
progress on this complex problem we propose and analyze a simplified setting of
a single content divided into two parts and served by multiple servers. We see 
that even in this simplified setting the exact analysis of queueing delay for
the systems using codes is intractable. As a result we provide a sub-optimal
scheduling algorithm called Blocking-one Scheduling (\tsf{BoS}) that allows us to
theoretically quantify the gains in latency achieved by coded system as
compared to a system that uses replication. The methodology we developed in
this paper is applicable to a more general setting that allows splitting the
content into more than $2$ parts. Further generalizations of our work a) extending our
scheduling algorithm to incorporate more than one blocking to improve the
analytical gains in latency, and b) considering scenario of serving multiple
contents, is part of our future work.

\section*{Appendix A -- Proof of Theorem \ref{theorem:BoS-per}}
We now proof Theorem \ref{theorem:BoS-per} by analyzing the Markov chain  using a set of carefully chosen global balance equations, which we call ``cuts.'' Our approach is to first compute $\pi_0$. Then starting from $\pi_0$, we iteratively compute all the other probabilities. 

\begin{proof} (Theorem \ref{theorem:BoS-per})
First consider the sets that contain the $(2r+2m, p)$ and $(2r+2m, g)$ states, i.e., the Type $3$ cuts in Fig. \ref{fig:mc-system}. 
We note that if the system is stable, then the total transition rate going out from any set of states must be equal to the total rate going into them. Thus, 
we first have the following equation for the states $(2r, p)$ and $(2r, g)$: 
\begin{eqnarray}
\hspace{-.4in}&& \pi_{2r}^{p}(\lambda+2r\mu) + \pi_{2r}^{g}(\lambda+(2r-1)\mu)\label{eq:red-cuts1} \\
\hspace{-.4in}&=& \pi_{2r-2}\lambda + \pi_{2r+1}2r\mu. \nonumber
\end{eqnarray}
Then for states $(2r+2m, p)$ and $(2r+2m, g)$ with $m\geq1$, we have: 
\begin{eqnarray}
\hspace{-.4in}&&\pi_{2r+2m}^{p}(\lambda+2r\mu) + \pi_{2r+2m}^{g}(\lambda+(2r-1)\mu)\label{eq:red-cuts2} \\
\hspace{-.4in}&=& \lambda(\pi^{p}_{2r+2m-2}+\pi^{g}_{2r+2m-2}) + \pi_{2r+2m+1}2r\mu. \nonumber
\end{eqnarray}
Summing (\ref{eq:red-cuts1}) and (\ref{eq:red-cuts2}) over $m=1, 2, ...$, we get: 
\begin{eqnarray}
\hspace{-.4in}&&2r\mu\sum_{m=0}^{\infty} \pi_{2r+2m}^{p} + (2r-1)\mu\sum_{m=0}^{\infty}\pi_{2r+2m}^{g} \nonumber\\
\hspace{-.4in}&&= \lambda\pi_{2r-2} +2r\mu\sum_{m=0}^{\infty} \pi_{2r+2m+1}. \label{eq:sum-2km}
\end{eqnarray}
Now consider the diagonal cuts, i.e., the Type $2$ cuts, starting from states $(2r, p)$ and $(2r, g)$. We have: 
\begin{eqnarray}
\hspace{-.4in}&&(\pi_{2r-2}+\pi_{2r-1})\lambda   =\pi_{2r}^{p}2r\mu+\pi_{2r}^{g} (2r-1)\mu,\label{eq:blue-cuts}\\
\hspace{-.4in}&&(\pi_{2r+2m}^{p}+\pi_{2r+2m}^{g}+\pi_{2r+2m+1})\lambda \label{eq:blue-cuts2}\\
\hspace{-.4in}&&\qquad\qquad = \pi_{2r+2m+2}^{p}2r\mu +  \pi_{2r+2m+2}^{g}(2r-1)\mu,\,\, m\geq0. \nonumber
\end{eqnarray}
Summing (\ref{eq:blue-cuts}) and (\ref{eq:blue-cuts2}) over $m=0, 1, ...$, we get:
\begin{eqnarray}
2r\mu\sum_{m=0}^{\infty} \pi_{2r+2m}^{p} + (2r-1)\mu\sum_{m=0}^{\infty}\pi_{2r+2m}^{g} \nonumber\\
= \lambda\big[1-\sum_{l=0}^{2r-3}\pi_l\big]. \label{eq:sum-2km-2}
\end{eqnarray}
Using (\ref{eq:sum-2km}) and (\ref{eq:sum-2km-2}), we thus obtain: 
\begin{eqnarray}
2r\mu\sum_{m=0}^{\infty} \pi_{2r+2m+1}  = \lambda\big[1-\sum_{l=0}^{2r-2}\pi_l\big]. \label{eq:sum-odd}
\end{eqnarray}
We now try to first find $\pi_0$. Consider only the ``p'' states, we get:
\begin{eqnarray}
\hspace{-.5in}&& \quad\,\,\,\,\pi_{2r}^{p}(\lambda+2r\mu)  = \pi_{2r-2} \lambda +\pi_{2r+1} (2r-1)\mu,\label{eq:a-cut}\\
\hspace{-.5in}&& \pi_{2r+2m}^{p}(\lambda+2r\mu)  = \pi_{2r+2m-2}^{p} \lambda +\pi_{2r+2m+1} (2r-1)\mu. \label{eq:a-cut2} 
\end{eqnarray}
Summing (\ref{eq:a-cut}) and (\ref{eq:a-cut2}) over $m\geq1$,  we obtain:
\begin{eqnarray}
\hspace{-.5in}&&2r\mu\sum_{m=0}^{\infty}\pi_{2r+2m}^{p}= \lambda\pi_{2r-2} + (2r-1)\mu\sum_{m=0}^{\infty}\pi_{2r+2m+1}. \label{eq:2kma-2mkp1}
\end{eqnarray}
Similarly, we can look at the ``g'' states and get: 
\begin{eqnarray}
\hspace{-.5in}&&\quad\,\,\,\,\pi_{2r}^{g}(\lambda+(2r-1)\mu)  =  \pi_{2r+1}\mu,\label{eq:b-cut}\\
\hspace{-.5in}&&\pi_{2r+2m}^{g}(\lambda+(2r-1)\mu)  = \pi_{2r+2m-2}^{g} \lambda +\pi_{2r+2m+1} \mu. \label{eq:b-cut2} 
\end{eqnarray}
Summing these up, we get:
\begin{eqnarray}
(2r-1) \mu\sum_{m=0}^{\infty}\pi_{2r+2m}^{g} =\mu\sum_{m=0}^{\infty}\pi_{2r+2m+1}. \label{eq:2kmb-2mkp1}
\end{eqnarray}
Using (\ref{eq:sum-odd}), (\ref{eq:2kma-2mkp1}) and (\ref{eq:2kmb-2mkp1}), we obtain:
\begin{eqnarray*}
\hspace{-.4in}&& \sum_{m=0}^{\infty} \big[ \pi_{2r+2m}^{p} +  \pi_{2r+2m}^{g} +  \pi_{2r+2m+1}   \big] 
\\
\hspace{-.4in}&& = \bigg[  \frac{\lambda}{2r\mu} +\frac{\lambda(2r-1)\mu}{(2r\mu)^2} + \frac{\lambda\mu}{(2r-1)\mu2r\mu} \bigg]\big[1-\sum_{l=0}^{2r-2}\pi_l\big] \\
\hspace{-.4in}&&\quad\quad+ \frac{\lambda}{2r\mu}\pi_{2r-2}. 
\end{eqnarray*}
Therefore, we get: 
\begin{eqnarray}
\hspace{-.4in}&& \bigg[  \frac{\lambda}{2r\mu} +\frac{\lambda(2r-1)\mu}{(2r\mu)^2}  + \frac{\lambda\mu}{(2r-1)\mu2r\mu} \bigg]\big[1-\sum_{l=0}^{2r-2}\pi_l\big]  \label{eq:starting-pis}\\
\hspace{-.4in}&&\qquad\qquad\qquad\qquad+ \frac{\lambda}{2r\mu}\pi_{2r-2}  =\big[1-\sum_{l=0}^{2r-2}\pi_l\big] - \pi_{2r-1}. \nonumber
\end{eqnarray}
We see that (\ref{eq:starting-pis}) provides one equation in terms of only $\pi_0, ..., \pi_{2r-1}$. Below we show that all the probabilities $\pi_1, ..., \pi_{2r-1}$ can be expressed in terms of $\pi_0$. In this case (\ref{eq:starting-pis}) will allow us to compute  $\pi_0$ exactly. This in turn enables us to compute $\pi_1, ..., \pi_{2r-1}$. To do so, we first consider the type $1$ and type $2$ cuts shown in Fig. \ref{fig:mc-system} to get: 
\begin{eqnarray}
\hspace{-.5in}&&\quad\,\,\,\pi_1=\frac{\lambda}{\mu}\pi_0,\label{eq:starting-p0-1}\\
\hspace{-.5in}&&\quad\pi_{2i} =\frac{\lambda}{2i\mu}(\pi_{2i-2}+\pi_{2i-1}), \,\,\,\forall\,\, i\in\{1, ..., r-1\},  \label{eq:starting-p0-2}\\ 
\hspace{-.5in}&&\pi_{2i+1} = \frac{\lambda}{ (2i+1)\mu}(\pi_{2i-1}+\pi_{2i}),\,\forall\, i\in\{1, ..., r-1\}. \label{eq:starting-p0-3}
\end{eqnarray}
Using (\ref{eq:starting-p0-1}), (\ref{eq:starting-p0-2}) and  (\ref{eq:starting-p0-3}), one can obtain: 
\begin{eqnarray}
\pi_l &=& a_l \pi_0, \,\,\,\forall\,\, l\in\{0, ..., 2r-1\}, \label{eq:gen-pi-p0}
\end{eqnarray}
where $\{a_l, l=0, ..., 2r-1\}$ are defined as in (\ref{eq:a-def}). 
%
%
Plugging (\ref{eq:gen-pi-p0}) back into (\ref{eq:starting-pis}) and denote $\eta\triangleq \frac{\lambda}{2r\mu} +\frac{\lambda(2r-1)\mu}{(2r\mu)^2} + \frac{\lambda\mu}{(2r-1)\mu2r\mu}$, we have: 
\begin{eqnarray*}
1- \sum_{l=0}^{2r-2} a_l\pi_0  =\eta -\eta\sum_{l=0}^{2r-2}a_l\pi_0 + \frac{\lambda}{2r\mu}a_{2r-2}\pi_{0} +a_{2r-1}\pi_{0}. 
\end{eqnarray*}
Therefore: 
\begin{eqnarray} 
\pi_0 = \frac{1-\eta}{(1-\eta)\sum_{l=0}^{2r-2}a_l + \frac{\lambda a_{2r-2}}{2r\mu} + a_{2r-1}}. 
\end{eqnarray}
It is not difficult to verify that $\pi_0$ is a valid probability if $1-\eta>0$, i.e., 
\begin{eqnarray}
\frac{\lambda}{2r\mu} +\frac{\lambda(2r-1)\mu}{(2r\mu)^2} + \frac{\lambda\mu}{(2r-1)\mu2r\mu} < 1.  
\end{eqnarray}
This implies that the supportable rate is:
\begin{eqnarray}
\lambda < r\mu \frac{1-\frac{1}{2r}}{1 - \frac{1}{2r} + \frac{1}{8r^2}} = r\mu\bigg( 1-\frac{1}{8r^2-4r+1} \bigg). 
\end{eqnarray}
Here $r\mu$ is the total rate the system can ever support. Hence, we see that \tsf{BoS} only lose a fraction $\frac{1}{ 8r^2 - 4r + 1 }$. This proves Part (a). 
 
To prove Part (b), we first see that one can now use (\ref{eq:gen-pi-p0}) to compute $\pi_0, ..., \pi_{2r-1}$. 
To compute $\pi_{2r+2m}^{p}, \pi_{2r+2m}^{g}$, for all $m\geq0$, we start from $m=0$. We have from  (\ref{eq:blue-cuts}) that: 
\begin{eqnarray}
\pi_{2r}^{p} 2r\mu + \pi_{2r}^{g} (2r-1) \mu = \lambda (\pi_{2r-1} + \pi_{2r-2}).\label{eq:p2k-1}
\end{eqnarray}
Now if we look at the state $(2r, p)$ and $(2r, g)$ separately, we get:
\begin{eqnarray}
\hspace{-.2in}\pi_{2r}^{p} (\lambda+2r\mu) &=& \pi_{2r-2}\lambda + \pi_{2r+1} (2r-1)\mu,\label{eq:pi2ka}\\
\hspace{-.2in}\pi_{2r}^{g} (\lambda + (2r-1)\mu) &=& \pi_{2r+1}\mu. \label{eq:pi2kb}
\end{eqnarray}
Canceling the term $ \pi_{2r+1}$ in (\ref{eq:pi2ka}) and (\ref{eq:pi2kb}), we get that: 
\begin{eqnarray}
\hspace{-.05in}\pi_{2r}^{p} (\lambda+2r\mu)  - \pi_{2r}^{g} (2r-1)(\lambda+(2r-1)\mu) = \lambda \pi_{2r-2}. \label{eq:p2k-2}
\end{eqnarray}
With (\ref{eq:p2k-1}) and (\ref{eq:p2k-2}), we can now compute $\pi_{2r}^{p}, \pi_{2r}^{g}$. To make the expressions more concise, we define:
\begin{eqnarray*}
\hspace{-.3in}&&\gamma_p \triangleq  \frac{2r\mu(\lambda+(2r-1)\mu)}{\mu}  + (\lambda+2r\mu), \\
\hspace{-.3in}&&\gamma_g \triangleq  \frac{-(2r-1)\mu(\lambda+2r\mu)}{2r\mu}  - (2r-1) (\lambda +(2r-1)\mu),\\
\hspace{-.3in}&&\beta_p \triangleq \frac{\lambda (\lambda + (2r-1)\mu)}{\mu}, \quad \beta_g \triangleq  \frac{-\lambda(\lambda+2r\mu)}{2r\mu}. 
\end{eqnarray*}
Then we get: 
\begin{eqnarray}
\pi_{2r}^{p} &=& \frac{1}{\gamma_p} \big[\beta_p (\pi_{2r-1} +\pi_{2r-2}) + \lambda \pi_{2r-2}\big],\\
\pi_{2r}^{g}  &=&\frac{1}{\gamma_g} \big[ \beta_g(\pi_{2r-1} +\pi_{2r-2}) +\lambda\pi_{2r-2}  \big].
\end{eqnarray}
Now for all the states $(2r+2m, p)$ and $(2r+2m, g)$  with $m\geq1$, using (\ref{eq:blue-cuts2}), (\ref{eq:a-cut2}) and (\ref{eq:b-cut2}), we get: 
%
\begin{eqnarray*}
&& 2r\mu \pi_{2r+2m}^{p} + (2r-1)\mu\pi_{2r+2m}^{g} \\
&&\qquad\qquad= \lambda (\pi_{2r+2m-2}^{p} + \pi_{2r+2m-2}^{g} + \pi_{2r +2m-1}),\\
&& (\lambda + 2r\mu) \pi_{2r+2m}^{p} - (2r-1) (\lambda + (2r-1)\mu) \pi_{2r+2m}^{g}\\
&&\qquad\qquad= \lambda \pi_{2r+2m-2}^{p} - (2r-1)\lambda \pi_{2r+2m-2}^{g}. 
 \end{eqnarray*}
We can thus obtain the following equations for all states $2r+2m, m\geq1$: \footnote{It can be verified that both $\pi_{2r+2m}^{p}$ and $\pi_{2r+2m}^{g}$ are both positive. Thus they are valid probabilities.}
\begin{eqnarray*}
\pi_{2r+2m}^{p} &=& \frac{1}{\gamma_p}  \big[  \beta_p (\pi_{2r+2m-2}^{p} + \pi_{2r+2m-2}^{g} + \pi_{2r +2m-1}) \\
&& \qquad +\lambda\pi_{2r+2m-2}^{p} -(2r-1)\lambda \pi_{2r+2m-2}^{g} \big], \\
\pi_{2r+2m}^{g} &=&  \frac{1}{\gamma_g}  \big[  \beta_g (\pi_{2r+2m-2}^{p} + \pi_{2r+2m-2}^{g} + \pi_{2r +2m-1}) \\
&& \qquad +\lambda\pi_{2r+2m-2}^{p} -(2r-1)\lambda \pi_{2r+2m-2}^{g} \big]. 
\end{eqnarray*}
Then, the probabilities $\pi_{2r+2m-1}$ with $m\geq1$ can be computed using type $1$ cuts, i.e., 
\begin{eqnarray}
2r\mu\pi_{2r+2m-1} = \lambda (\pi_{2r+2m-2}^p+\pi_{2r+2m-2}^g + \pi_{2r+2m-3}). 
\end{eqnarray}
%
With all the above results, the average packet request delay in the system can be computed as: 
\begin{eqnarray*}
d_{\text{packet}}^{\text{coded}} = \frac{1}{2\lambda}\bigg[\sum_{l=1}^{2r-1}l\pi_l + \sum_{m\geq0} (2r+2m+1) \pi_{2r+2m+1} \\
+ 
\sum_{m\geq0} (2r+2m)(\pi_{2r+2m}^{p} +\pi_{2r+2m}^{g})\bigg].
\end{eqnarray*}
This completes the proof of Part (b). 
\end{proof}

\section*{Appendix B -- Proof of Lemma  \ref{lemma:per-user-per-req}}
Here we prove Lemma  \ref{lemma:per-user-per-req}. 
\begin{proof} (Lemma \ref{lemma:per-user-per-req}) 
Consider any request $u$ that enters and departs from the system.  
Let $w_{u1}$ and $w_{u2}$  be the waiting times of its first and second packet requests  in the queue. Then let $s_{u1}$ and $s_{u2}$ be the service times of the two packet requests. 

We now derive a relationship between $w_{u1}$ and $w_{u2}$. According to \tsf{BoS}, packet request $1$ always goes before packet request $2$, thus $w_{u1}\leq w_{u2}$. Now suppose packet request $1$ goes to a storage unit $j$ at a time $t$. Then, the extra waiting time of packet request $2$ in the queue is exactly the time it takes for any of the other $2r-1$ storage units to be free. 
By the exponential service time nature, this time is exponentially distributed with rate $(2r-1)\mu$, i.e., 
\begin{eqnarray}
w_{u2} = w_{u1} + \tau_u, \quad\text{where}\,\,\, \tau_u\sim \exp((2r-1)\mu).
\end{eqnarray}
Now let $t_{u1}$ and $t_{u2}$ be the total times packet request $1$ and $2$ stay in the system, and let $t_u$ be the time the request spends in the system, we have: 
\begin{eqnarray}
t_{u1}  &=& w_{u1} + s_{u1}, \\
t_{u2}  &=& w_{u2} + \tau_u + s_{u2},\\
t_u&=&w_{u1} + \max(s_{u1}, s_{u2}+\tau_u). 
\end{eqnarray}
Denote $Z = \max(s_{u1}, s_{u2}+\tau_u)$. It can be verified that: 
\begin{eqnarray*}
\hspace{-.3in}&&f_Z(z) = \frac{2r-1}{2r-2}\mu e^{-\mu z} - \frac{(2r-1)\mu}{2r-2} e^{-(2r-1)\mu z} \\
\hspace{-.3in}&&\qquad\qquad + \mu e^{-\mu z} - \frac{2r-1}{2r-2} 2\mu e^{-2\mu z} + \frac{1}{2r-2}\mu e^{-2r\mu z}, 
\end{eqnarray*}
and: 
\begin{eqnarray}
\expect{Z} = \frac{1}{\mu} + \frac{2r-1}{2r-2} \frac{1}{2\mu}  - \frac{1}{(2r-2)(2r-1)2r\mu}. \end{eqnarray}
It thus follows that the difference between the average request delay and the average packet request delay  under \tsf{BoS} is given by: 
\begin{eqnarray*}
\hspace{-.3in}&&d^{\text{coded}}_{\text{req}}-d^{\text{coded}}_{\text{packet}}  =\expect{t_u} - \frac{1}{2} \big(\expect{t_{u1}} +\expect{t_{u2}} \big) \\
\hspace{-.3in}&&\qquad\quad\,\,\,  =  \frac{2r-1}{2r-2} \frac{1}{2\mu}   - \frac{1}{(2r-2)(2r-1)2r\mu}  - \frac{1}{2(2r-1)\mu}. 
\end{eqnarray*}
This completes the proof. 
\end{proof}

\pagestyle{empty}
\bibliographystyle{plain.bst}
\bibliography{coding-delay-refs}
\end{document}